\theoremstyle{plain}
\newtheorem{theorem}{Theorem}[section] 
\newtheorem{corollary}[theorem]{Corollary}
\theoremstyle{definition}
\newtheorem{definition}[theorem]{Definition}
\newtheorem{example}[theorem]{Example}
\theoremstyle{remark}
\newtheorem{remark}[theorem]{Remark}
\title{Representation of solutions to continuous and discrete first-order linear matrix equations with delay}
\author{Javad A. Asadzade\thanks{Department of Mathematics, Eastern Mediterranean University, North Cyprus, Turkey. Email: javad.asadzade@emu.edu.tr}
	\, and \, Nazim I. Mahmudov\thanks{Department of Mathematics, Eastern Mediterranean University, North Cyprus, Turkey; Research Center of Econophysics, Azerbaijan State University of Economics (UNEC), Baku, Azerbaijan. Email: nazim.mahmudov@emu.edu.tr}}
\date{} 
\begin{document}
	
	\maketitle

\begin{abstract}
In this paper, we study continuous and discrete linear delay systems given respectively by
\[
\dot{X}(\xi) = A_0 X(\xi) + X(\xi)A_1 + B_0 X(\xi-\sigma) + X(\xi-\sigma)B_1 + G(\xi),
\]
and its discrete analogue
\[
X(u+1) = A_0 X(u) + X(u)A_1 + B_0 X(u-m) + X(u-m)B_1 + G(u),
\]
where \(A_0, A_1, B_0, B_1 \in \mathbb{R}^{d \times d}\) are constant noncommuting matrices, and \(\sigma>0\), \(m \in \mathbb{N}\) denote the delay parameters.  
The main objective is to generalize the classical results of \cite{diblik1, diblik2} and to provide explicit representations of the solutions. For this purpose, we present generalized delayed exponential-type systems for both continuous and discrete cases. This approach allows us to remove the restrictive commutativity conditions \(B_1G(\xi)=G(\xi)B_1\) and \(B_1\Psi(\xi)=\Psi(\xi)B_1\) imposed in \cite{diblik1, diblik2}, thus obtaining explicit solution formulas for more general classes of systems.
\end{abstract}

	\noindent\textbf{Keywords:} Linear matrix systems; matrix solution; delay; explicit formula; representation of solutions.
	
\section{Introduction}\label{sec1}

Linear matrix systems with delayed arguments constitute one of the fundamental areas of dynamical systems due to their wide applications in mathematics, control theory, signal processing, iterative learning, and stability analysis. The explicit representation of solutions to such systems plays a crucial role in their theoretical study and in the development of computational methods. Therefore, this topic has always attracted significant attention from researchers.

The pioneering work in this direction was obtained by Khusainov et al. \cite{Khusayanov1}, who investigated a linear matrix differential equation with delay and derived an explicit representation of its solution. Later, the fractional analogue of this problem was studied by Li and Wang \cite{Li1, Li2}, who established several results for single-delay fractional systems. Subsequently, Mahmudov \cite{Nazim} extended these ideas and developed a more general representation based on the delayed perturbation of the Mittag–Leffler function. 

Inspired by these studies, Huseynov and Mahmudov \cite{Ismail} introduced a delayed analogue of the three-parameter Mittag–Leffler function and applied it to the analysis of Caputo-type fractional time-delay differential equations with permutable matrices. In their work, the authors derived a new representation of solutions to homogeneous and nonhomogeneous fractional delay systems using the Laplace transform and the variation of constants formula, and further established existence and uniqueness results for nonlinear fractional delay systems via the contraction mapping principle.

In addition to these studies, analogous results for discrete, impulsive, and second-order systems have also been obtained. For instance, Diblík and Khusainov \cite{diblik5, diblik6} established representation formulas for discrete delayed systems with constant coefficients and commutative matrices, while later works by Diblík and Mencáková \cite{diblik4} and Elshenhab and Wang \cite{Elshenhab} extended these results to systems involving second-order differences and nonpermutable matrices. Further developments can be found in the works of Svoboda \cite{Svoboda}, who analyzed the representation of solutions for second-order differential systems with constant delays, and Khusainov and Shuklin \cite{Khusainov2003}, who considered autonomous delayed systems with permutation matrices. The results of Khusainov et al. \cite{Khusainov2008} generalized this framework to oscillating systems with pure delay, providing constructive forms of the fundamental solution. For discrete-time systems, Jin et al. \cite{Jin} investigated impulsive systems with nonpermutable matrices, while Liang et al. \cite{Liang2018} studied iterative learning control (ILC) for linear discrete systems with single delay, emphasizing the connection between representation formulas and control performance. Furthermore, Mahmudov \cite{Mahmudov2018, Mahmudov2024, Mahmudov2025, Mah} presented several representations for discrete linear systems with nonpermutable and multiple delays, including applications to iterative learning control. 

Parallel investigations for continuous-time and fractional-order systems can be found in the works of Chen \cite{Chen}, Liang et al. \cite{LiangORegan}, and Mahmudov \cite{Mahmudov2019, Mahmudov2022, Mahmudov2022b}, where various delayed Mittag–Leffler type matrix functions were employed to represent the solutions and analyze their stability properties. In the context of multi-delay and neutral-type systems, important contributions have been made by Medved and Pospíšil \cite{Medved2012, Medved2016}, as well as by Pospíšil \cite{Pospisil2012, Pospisil2017, Pospíšil}, who developed comprehensive frameworks for the representation and stability analysis of systems with permutable and nonpermutable matrix coefficients. More recently, Qiu and Wang \cite{QiuWang, Qiu2023} studied the representation of solutions for second-order and neutral-type delay differential equations, extending the theory to more general functional and fractal domains.

Overall, these developments collectively demonstrate that the representation of solutions for delayed matrix systems whether continuous, discrete, fractional, or multi-delayed remains a central problem of ongoing research interest. Moreover, the classical results obtained by Diblík \cite{diblik1, diblik2} for first-order differential and discrete systems with a single delay highlight the necessity for further exploration in this direction. Motivated by these works and the above-mentioned contributions, in this paper, we consider two-sided delayed matrix systems of the following form:
\begin{equation}\label{1}
\begin{cases}
\dot{X}(\xi) = A_0 X(\xi) + X(\xi) A_1 + B_0 X(\xi-\sigma) + X(\xi-\sigma) B_1 + G(\xi), & \xi \geq 0,\\[1mm]
X(\xi) = \Psi(\xi), & \xi \in [-\sigma,0],
\end{cases}
\end{equation}
where $\sigma > 0$ is the delay, $X(\xi)$ is the unknown $d \times d$ two-sided matrix, $A_0, A_1, B_0, B_1 \in \mathbb{R}^{d \times d}$ are given (generally noncommutative) constant matrices, and $G(\xi)$ is a given matrix-valued function. 

Similarly, we investigate the discrete-time counterpart of \eqref{1}, which is defined as
\begin{equation}\label{2}
\begin{cases}
X(u+1) = A_0 X(u) + X(u) A_1 + B_0 X(u-m) + X(u-m) B_1 + G(u), & u \in \mathbb{Z}_0^\infty,\\[1mm]
X(u) = \Psi(u), & u \in \mathbb{Z}_{-m}^0,
\end{cases}
\end{equation}
where $m \in \mathbb{N}$ is the discrete delay, and $X(u)$ denotes the unknown $d \times d$ \textbf{two-sided} matrix sequence.

Our main objective is to provide explicit representations of the solutions for these systems. To this end, we construct the fundamental solution, taking into account the noncommutativity of the coefficient matrices. This noncommutativity significantly increases the complexity of the problem, leading to more intricate solution structures.  

In this way, our approach extends the work of Diblík \cite{diblik1, diblik2} to both continuous and discrete time systems. Moreover, in particular cases such as the continuous system with \(A_0 = A_1 = \Theta\) or the discrete system with \(A_0 = I\), \(A_1 = \Theta\), and under additional conditions like \(B_1 G = G B_1\) and \(\Psi B_1 = B_1 \Psi\)—one can easily recover their results. Therefore, our method not only provides a more general framework but also allows the explicit representation of solutions without the restrictive assumptions mentioned above.

More recent studies \cite{Yang2025, Els, Dib2026, Els2} have addressed second-order, fractional-order, and multi-delay systems, obtaining parallel results by applying techniques similar to those of Diblík \cite{diblik1, diblik2} under analogous assumptions. However, it should be emphasized that these results still do not overcome the same limitations as in the classical case. In particular, their solution representations rely on commutativity conditions imposed on \(B_1\), \(G\), and \(\Psi\) (or their multi-delay analogues).  
Moreover, unlike our approach, these works consider systems without the delay-free terms \(A_0 X(t)\) and \(X(t) A_1\), yet they obtain analogous results. Therefore, the methodology presented in this paper overcomes these restrictions and enables stronger and more general solution representations.

Many realistic models involve noncommutative, two-sided matrices, where the order of multiplication fundamentally affects system dynamics. Developing constructive techniques that account for both delays and noncommutative, two-sided coefficients is therefore an important research direction.  

The structure of the paper is as follows. In Section~\ref{sec2}, we introduce the fundamental constructions for delayed matrix systems and define the auxiliary sequences \(Q_u\) used to build the fundamental matrix functions. Section~\ref{sec3} establishes the main properties of these fundamental solutions and extends these results to nonhomogeneous problems, providing general solution representations. Finally, Section~\ref{sec6} illustrates the applicability of our formulas with two-dimensional examples, highlighting the influence of noncommutative, two-sided matrices on the structure of solutions.

	\section{Fundamental Constructions}\label{sec2}

In this section, we introduce the auxiliary sequences and functions that serve as the foundation for explicit solutions of delayed matrix systems. We present a unified recursive construction of the matrices \(Q_u\) applicable to both continuous and discrete frameworks. The principal distinction between the two settings appears only in the final form of the fundamental function \(Z_D(\cdot)\).

\subsection{Auxiliary Sequences}

Let \(\{Q_u(\cdot)\}_{u\in\mathbb{N}}\) be a family of matrices recursively defined as
\begin{align}\label{Aux}
\begin{cases}
Q_{u+1}(l\delta) = A_0 Q_u(l\delta) + Q_u(l\delta) A_1 + B_0 Q_u((l-1)\delta) + Q_u((l-1)\delta) B_1,\\[0.5ex]
Q_0(l\delta) = Q_u(-\delta) = \Theta, \quad Q_1(0) = D, \quad u,l = 0,1,2,\dots,
\end{cases}
\end{align}
where \(\Theta\) and \(D\) denote the zero and a given \(d \times d\) matrices, respectively, and \(\delta\) represents the delay step, defined by
\[
\delta =
\begin{cases}
\sigma, & \text{for continuous-time systems},\\
m, & \text{for discrete-time systems}.
\end{cases}
\]

The general noncommutative structure of \(Q_{u+1}(k)\) is summarized in Table~\ref{tab:Q_table}.

\begin{table}[!ht]
\centering
\renewcommand{\arraystretch}{1.5} 
\setlength{\tabcolsep}{8pt} 
\rowcolors{2}{gray!10}{white}
\begin{tabular}{c|cccccc}
\hline
\rowcolor{gray!25}
$\mathcal{Q}_{u+1}(k)$ & $k=0$ & $k=\delta$ & $k=2\delta$ & $\cdots$ & $k=q\delta$ \\ \hline
$\mathcal{Q}_1(k)$ & $D$ & $\Theta$ & $\Theta$ & $\cdots$ & $\Theta$ \\ \hline
$\mathcal{Q}_2(k)$ & $A_0 D+DA_1$ & $B_0D+DB_1$ & $\Theta$ & $\cdots$ & $\Theta$ \\ \hline
$\mathcal{Q}_3(k)$ & $\sum_{n=0}^2 \binom{2}{n}A_0^n D A_1^{2-n}$ & $\cdots$ & $\sum_{n=0}^2 \binom{2}{n} B_0^n D B_1^{2-n}$ & $\cdots$ & $\Theta$ \\ \hline
$\vdots$ & $\vdots$ & $\vdots$ & $\vdots$ & $\ddots$ & $\Theta$ \\ \hline
$\mathcal{Q}_{q+1}(k)$ & $\sum_{n=0}^q \binom{q}{n}A_0^n D A_1^{q-n}$ & $\cdots$ & $\cdots$ & $\cdots$ & $\sum_{n=0}^q \binom{q}{n}B_0^n D B_1^{q-n}$ \\ \hline
\end{tabular}
\caption{Recursive structure of \(\mathcal{Q}_{u+1}(k)\)}
\label{tab:Q_table}
\end{table}

\subsection{Fundamental Functions}

\begin{definition}
For \(\xi \in \mathbb{R}\), the \emph{fundamental function} \(Z_D(\xi)\) for continuous-time systems is defined by
\begin{align}\label{Y_cont}
Z_D(\xi) =
\begin{cases}
\Theta, & \xi < 0,\\[0.5ex]
D, & \xi = 0,\\[0.5ex]
\displaystyle \sum_{r=0}^{\infty} Q_{r+1}(0) \frac{\xi^r}{r!} + \cdots + \sum_{r=n-1}^{\infty} Q_{r+1}((n-1)\sigma) \frac{(\xi-(n-1)\sigma)^r}{r!}, & (n-1)\sigma < \xi \le n\sigma.
\end{cases}
\end{align}
\end{definition}

Here, the coefficients of the first series correspond to \(k=0\) in Table~\ref{tab:Q_table}, the second series to \(k=\sigma\), and so forth. By regrouping terms, \(Z_D(\xi)\) can be written compactly as
\begin{align*}
Z_D(\xi) &= \sum_{k=0}^{\infty} \sum_{i=0}^{n-1} Q_{k+1}(i\sigma) \frac{(\xi - i\sigma)_+^k}{k!}, \quad (n-1)\sigma < \xi \le n\sigma,
\end{align*}
where \((\xi-i\sigma)_+^k = (\xi-i\sigma)^k\) if \(\xi \ge i\sigma\), and zero otherwise.

\begin{definition}
Let \(m \ge 1\) and let \(A_0,A_1,B_0,B_1,D \in \mathbb{R}^{d \times d}\) be constant matrices. The \emph{delayed perturbation of the discrete matrix exponential} is defined as
\begin{align}\label{Y_disc}
Z_D(u) = \sum_{i=0}^{\lfloor \frac{u+m}{m+1} \rfloor} Q_{u-(i-1)m}(i), \quad u \in \mathbb{Z}_0^\infty.
\end{align}
\end{definition}

If \(l = \lfloor \frac{u+m}{m+1} \rfloor\), then by Lemma 3.4 in \cite{Mah}, the discrete fundamental solution can equivalently be expressed as
\begin{align}\label{Y_disc1}
Z_D(u) =
\begin{cases}
\Theta, & u \in \mathbb{Z}_{-\infty}^{-m-1},\\[0.5ex]
\displaystyle \sum_{n=0}^{u+m-1} \binom{u+m-1}{n} A_0^n D A_1^{u+m-1-n} + \sum_{i=1}^{l} Q_{u-(i-1)m}(i), & u \in \mathbb{Z}^{l(m+1)}_{(l-1)(m+1)+1}.
\end{cases}
\end{align}

	\section{Main results}\label{sec3}
	This section examines the solution of \eqref{1} and \eqref{2}.
	\subsection{Continious-time case}
	This subsection is devoted to the study of the following homogeneous matrix delay problem:
	\begin{equation}\label{q2}
		\begin{cases}
			\dot{Z}_D(\xi) =A_0 Z_D(\xi) + Z_D(\xi) A_1+ B_0 Z_D(\xi-\sigma) + Z_D(\xi-\sigma) B_1, & \xi \in (0, \infty),\\[1mm]
			Z_D(\xi) = D, & \xi \in [- \sigma, 0],
		\end{cases}
	\end{equation}
	where the matrices $A_0, \, A_1,\,B_0$ and $B_1$ do not commute each other.
	
	This section aims to derive an explicit representation of the solution to \eqref{q2}, which is established in the theorem below.
	
	\begin{theorem}\label{t1}
		$ Z_D(\xi)$ is a solution of 
		\begin{align}\label{q1}
			\dot{Z}_D(\xi)=A_0 Z_D(\xi)+Z_D(\xi)A_1+B_0 Z_D(\xi-\sigma)+Z_D(\xi-\sigma)B_1.
		\end{align}
	\end{theorem}
	\begin{proof}
   We show that $Z_D(\xi)$ satisfies differential equation \eqref{q1} on each interval $\xi\in  (\xi_{u-1},\xi_u]$, for $u=1,2,3,\dots$. 
    
		\noindent\textbf{(i)} For \textbf{$0 < t \leq \sigma$:}  
\[
\dot{Z}_D(\xi) = \sum_{i=1}^{\infty} Q_{i+1}(0) \frac{\xi^{i-1}}{(i-1)!} 
= \sum_{i=0}^{\infty} Q_{i+2}(0) \frac{\xi^i}{i!}.
\]  
Using the recursion for $Q_{i+2}(0)$, we get
\[
\dot{Z}_D(\xi) = \sum_{i=0}^{\infty} \big(A_0 Q_{i+1}(0) + Q_{i+1}(0) A_1 + B_0 Q_{i+1}(-\sigma) + Q_{i+1}(-\sigma) B_1\big) \frac{\xi^i}{i!}.
\]  
Since $Q_{i+1}(-\sigma) = \Theta$ for all $i$, it follows that
\[
\dot{Z}_D(\xi) = A_0 Z_D(\xi) + Z_D(\xi) A_1.
\]  

\noindent\textbf{(ii)} For \textbf{$\sigma < t \leq 2\sigma$:}  

\begin{align*}
  &\qquad  \dot{Z}_D(\xi) = \sum_{i=1}^{\infty} Q_{i+1}(0) \frac{\xi^{i-1}}{(i-1)!}+\sum_{i=1}^{\infty} Q_{i+1}(\sigma) \frac{(\xi -\sigma)^{i-1}}{(i-1)!} 
= \sum_{i=0}^{\infty} Q_{i+2}(0) \frac{\xi^i}{i!}+\sum_{i=0}^{\infty} Q_{i+2}(\sigma) \frac{(\xi -\sigma)^{i}}{i!} \\
&= \sum_{i=0}^{\infty}\left( A_0 Q_{i+1}(0)+Q_{i+1}(0)A_1\right) \frac{\xi^i}{i!}+\sum_{i=0}^{\infty} \left(A_0 Q_{i+1}(\sigma)+Q_{i+1}A_1+B_0 Q_{i+1}(0)+Q_{i+1}(0)B_1\right) \frac{(\xi -\sigma)^{i}}{i!} \\
&=A_0\left(\sum_{i=0}^{\infty}Q_{i+1}(0) \frac{\xi^i}{i!}+\sum_{i=1}^{\infty}  Q_{i+1}(\sigma)\frac{(\xi -\sigma)^{i}}{i!}\right)+\left(\sum_{i=0}^{\infty}Q_{i+1}(0) \frac{\xi^i}{i!}+\sum_{i=1}^{\infty}  Q_{i+1}(\sigma)\frac{(\xi -\sigma)^{i}}{i!}\right)A_1\\
&+B_0 \left(\sum_{i=0}^{\infty}Q_{i+1}(0) \frac{(\xi -\sigma)^i}{i!}\right)+ \left(\sum_{i=0}^{\infty}Q_{i+1}(0) \frac{(\xi -\sigma)^i}{i!}\right)B_1\\
&=A_0Z_D(\xi)+Z_D(\xi)A_1+B_0 Z_D(\xi-\sigma)+Z_D(\xi-\sigma)B_1
\end{align*}
		
		\noindent\textbf{(iii)} For $
        u\sigma<\xi \leq (u+1)\sigma$, we have
	
		\begin{align*} \dot{Z}_D(\xi) &= \sum_{k=l}^{\infty}\;\sum_{l=0}^{u} Q_{k+1}(l\sigma)\frac{(\xi-l\sigma)_+^{k-1}}{(k-1)!}\\ &=\sum_{k=l}^{\infty}\;\sum_{l=0}^{u}\left(A_0 Q_{k}(l\sigma)+Q_{k}(l\sigma)A_1+B_0Q_{k}((l-1)\sigma )+Q_{k}((l-1)\sigma )B_1\right)\frac{(\xi-l\sigma)_+^{k-1}}{(k-1)!}\\ &=\sum_{k=l}^{\infty}\;\sum_{l=0}^{u}A_0 Q_{k+1}(l\sigma)\frac{(\xi-l\sigma)_+^{k}}{k!}+\sum_{k=l}^{\infty}\;\sum_{l=0}^{u} Q_{k+1}(l\sigma)A_1\frac{(\xi-l\sigma)_+^{k}}{k!}\\ &+\sum_{k=l}^{\infty}\;\sum_{l=0}^{u-1}B_0 Q_{k+1}(l\sigma)\frac{(\xi-(l+1)\sigma )_+^{k}}{k!}+\sum_{k=l}^{\infty}\;\sum_{l=0}^{u-1} Q_{k+1}(l\sigma)B_1\frac{(\xi-(l+1)\sigma )_+^{k}}{k!}\\ &=A_0Z_D(\xi)+Z_D(\xi)A_1+B_0Z_D(\xi-\sigma)+Z_D(\xi-\sigma)B_1. \end{align*}
	\end{proof}
	From Theorem \ref{t1} and the first two formulas in (4), it follows immediately 
	\begin{theorem}
		Let $A_0,A_1,B_0$ and $B_1$ be non-commutative constant $d\times d$ matrices. Then the matrix $Z_D(\xi)$ is the unique solution of the initial Eqs. \eqref{q2}.
	\end{theorem}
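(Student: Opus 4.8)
The plan is to split the claim into its two logical halves: existence (the constructed fundamental function $Z$ from \eqref{Y_cont} solves \eqref{q2}) and uniqueness (no other function does). Existence is essentially already in hand. Theorem~\ref{t1} shows that $Z$ satisfies the homogeneous delay equation \eqref{q1} on all of $\mathbb{R}$, and restricting to $\vartheta \ge 0$ gives precisely the differential equation in \eqref{q2}. For the initial condition, I would read off the first two branches of \eqref{Y_cont}: for $\vartheta \in [-\sigma,0)$ one has $Z(\vartheta) = Q_1(0) = I$, and evaluating the next branch at its left endpoint gives $Z(0) = Q_1(0) + Q_2(\sigma)\cdot 0 = I$, so $Z \equiv I$ on $[-\sigma,0]$ and the two pieces match continuously at $\vartheta = 0$. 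This establishes that $Z$ is a solution.

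The substantive part is uniqueness, which I would prove by the classical method of steps. Suppose $Z_1$ and $Z_2$ are both solutions of \eqref{q2}, and set $W := Z_1 - Z_2$. By linearity, $W$ satisfies the homogeneous equation $\dot{W}(\vartheta) = A_0 W(\vartheta-\sigma) + W(\vartheta-\sigma)A_1$ for $\vartheta \ge 0$ together with the homogeneous initial data $W(\vartheta) = \Theta$ on $[-\sigma,0]$. I would then argue inductively over the consecutive intervals $I_k := [(k-1)\sigma,\, k\sigma]$, $k = 1,2,\dots$. For the base case $k=1$ and $\vartheta \in I_1$, the shifted argument $\vartheta-\sigma$ lies in $[-\sigma,0]$, where $W$ vanishes; hence $\dot{W}(\vartheta) = \Theta$, so $W$ is constant on $I_1$, and the value $W(0) = \Theta$ forces $W \equiv \Theta$ there. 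For the inductive step, assuming $W \equiv \Theta$ on $I_{k-1}$, the same computation on $I_k$ gives $\dot{W} \equiv \Theta$, and continuity together with $W((k-1)\sigma) = \Theta$ yields $W \equiv \Theta$ on $I_k$. Since the intervals $I_k$ cover $[0,\infty)$, I conclude $W \equiv \Theta$, i.e. $Z_1 = Z_2$.

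I expect no deep difficulty here; the only points requiring care are bookkeeping rather than genuine obstacles. The first is ensuring that a candidate solution is continuous (and piecewise $C^1$) so that the pointwise derivative is meaningful and the step-by-step integration is justified — this is what lets the constant-on-each-interval conclusion propagate across the breakpoints $\vartheta = k\sigma$. The second is verifying that the piecewise definition \eqref{Y_cont} is itself continuous at each breakpoint, so that the object whose uniqueness we assert is genuinely a well-defined solution; this reduces to checking that the newly appearing summand $Q_{u+1}(u\sigma)\,(\vartheta-(u-1)\sigma)^u/u!$ vanishes at the left endpoint $\vartheta = (u-1)\sigma$, which it does for $u \ge 1$. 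With these checks in place, the method of steps closes the argument, and the statement indeed follows from Theorem~\ref{t1} and the initial branches of \eqref{Y_cont}.
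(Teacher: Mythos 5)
Your proposal is correct. Note, however, that the paper itself offers no written proof of this statement: it simply remarks that the result ``follows immediately'' from Theorem~\ref{t1} and the first two branches of the definition \eqref{Y_cont}. Your existence half is exactly that intended argument, including the worthwhile check that the branch on $[0,\sigma)$ evaluates to $I$ at $\vartheta=0$ so that $Z\equiv I$ on the closed interval $[-\sigma,0]$. What you add beyond the paper is the uniqueness half: the method-of-steps induction on the intervals $[(k-1)\sigma,\,k\sigma]$, showing that the difference $W$ of two solutions has vanishing derivative on each interval and hence propagates the zero initial data forward. This is the standard and correct way to close the gap the paper leaves implicit, and your attention to the regularity needed for the pointwise derivative and to continuity of \eqref{Y_cont} at the breakpoints (the new summand $Q_{u+1}(u\sigma)(\vartheta-(u-1)\sigma)^u/u!$ vanishing at the left endpoint) addresses precisely the points a careful reader would want verified. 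In short: same route as the paper for existence, plus a complete uniqueness argument the paper only asserts.
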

    \begin{corollary}
        If $A_1=B_1=\Theta$, then the unique solution of \eqref{q2} is given by
        \begin{align*}
            \sum_{k=0}^{\infty}\;\sum_{i=0}^{u-1} P_{k+1}(i\sigma)\,\frac{(\xi-i\sigma)_+^{k}}{k!}, 
    \quad (u-1)\sigma < \xi \leq u\sigma,
        \end{align*}
        where 
        
        \begin{align*}
          &  P_{u+1}(l\sigma)=A_0 P_{u}(l\sigma)+B_0 P_{u}((l-1)\sigma),\\
       & P_0(l\delta) = P_u(-\delta) = \Theta, \quad P_1(0) = D, \quad u,l=0,1,2,\dots
        \end{align*}
    \end{corollary}
	\begin{corollary}\label{col1}
		If $A_0 = A_1=\Theta$, then the unique solution of \eqref{q2} is given by
		\[
		Z_D(\xi) = \sum_{j=0}^u\sum_{l=0}^{j} \binom{u}{l} B_0^{u-l}D B_1^l \frac{(\xi-(j-1)\sigma)^j}{j!} , 
		\quad \text{for } \xi \in [(u-1)\sigma,\, u \sigma).
		\]
	\end{corollary}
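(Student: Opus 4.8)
The plan is to obtain the Corollary as the commutative specialization of the two theorems just proved, so the only substantive computation is the closed-form evaluation of the auxiliary matrices $Q_{r+1}(r\sigma)$ appearing in the fundamental function. First I would invoke the uniqueness theorem: the unique solution of the initial value problem \eqref{q2} is the fundamental function $Z(\vartheta)$ of \eqref{Y_cont}, which on each interval $[(u-1)\sigma,u\sigma)$ is given by $\sum_{r=0}^{u} Q_{r+1}(r\sigma)\,\frac{(\vartheta-(r-1)\sigma)^{r}}{r!}$. Since this representation is already established for arbitrary (possibly noncommuting) coefficient matrices, it remains only to write the matrices $Q_{r+1}(r\sigma)$ explicitly.

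The key step is to evaluate these diagonal entries of the recursion, which are the only nonzero matrices entering $Z$. By definition $Q_{r+1}(r\sigma)=A_0\,Q_{r}((r-1)\sigma)+Q_{r}((r-1)\sigma)\,A_1$ with $Q_1(0)=I$; setting $\mathcal{L}(X)=A_0X+XA_1$, this reads $Q_{r+1}(r\sigma)=\mathcal{L}^{r}(I)$. I would prove by induction on $r$ that $\mathcal{L}^{r}(I)=\sum_{l=0}^{r}\binom{r}{l}A_0^{\,r-l}A_1^{l}$. The base case $r=0$ is $I$; in the inductive step, applying $\mathcal{L}$ prepends an $A_0$ to one copy and appends an $A_1$ to the other, leaving both monomial families in the normally ordered form $A_0^{\,r+1-l}A_1^{l}$, so that a shift of index together with Pascal's identity $\binom{r}{l-1}+\binom{r}{l}=\binom{r+1}{l}$ merges them into the asserted sum. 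Notably this induction needs no commutativity; the hypothesis $A_0A_1=A_1A_0$ is used only to rewrite $\sum_{l=0}^{r}\binom{r}{l}A_0^{\,r-l}A_1^{l}=(A_0+A_1)^{r}$ and thereby recover the classical form of \cite{diblik1}.

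Finally I would substitute $Q_{r+1}(r\sigma)=\sum_{l=0}^{r}\binom{r}{l}A_0^{\,r-l}A_1^{l}$ into the representation and relabel $r\to j$, obtaining
\[
Z(\vartheta)=\sum_{j=0}^{u}\sum_{l=0}^{j}\binom{j}{l}A_0^{\,j-l}A_1^{l}\,\frac{(\vartheta-(j-1)\sigma)^{j}}{j!},\qquad \vartheta\in[(u-1)\sigma,u\sigma),
\]
which is exactly the compact form of \eqref{Y_cont_comm} and the claimed identity (with the inner binomial coefficient read as $\binom{j}{l}$), while uniqueness carries over verbatim from the preceding theorem. The single genuinely computational point is the one-line induction of the middle paragraph; every other step is a direct appeal to already-proved results and routine reindexing. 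Indeed, since the closed form $Q_{u+1}(l\delta)=\binom{u}{l}A_0^{\,u-l}A_1^{l}$ is already recorded in Section~\ref{sec2}, even this computation may simply be cited, so the main work is really just confirming that the commutative reduction matches the stated double sum.
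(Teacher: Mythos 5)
Your proposal is correct and takes essentially the same route the paper intends: the corollary is the commutative specialization of Theorem~\ref{t1} and its uniqueness companion, obtained by substituting the closed form $Q_{r+1}(r\sigma)=\sum_{l=0}^{r}\binom{r}{l}A_0^{\,r-l}A_1^{l}$ (already recorded in Section~\ref{sec2}) into the piecewise representation of $Z(\vartheta)$ and relabeling the index. Your one-line induction for $\mathcal{L}^{r}(I)$ and your reading of the inner binomial coefficient as $\binom{j}{l}$ (matching the compact form of \eqref{Y_cont_comm}) are both sound and merely make explicit what the paper treats as immediate.
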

	\begin{remark}
	    Note that, if we take $D=I$ in corollary \ref{col1}, then the above result coincides by \cite{diblik1}.
	\end{remark}
	\begin{corollary}
		If $A_0 = A_1=B_1 = \Theta$, and $D=I$, then the unique solution of \eqref{q2} is
		\[
		Z_I(\xi) = \sum_{j=0}^u B_0^j\frac{(\xi-(j-1)\sigma)^j}{j!}  = e^{B_0 (\xi-\sigma)}, 
		\quad \text{for } \xi \in [(u-1)\sigma,\, u \sigma).
		\]
	\end{corollary}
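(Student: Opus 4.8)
The plan is to specialize the general noncommutative representation established above to the degenerate case $A_1 = \Theta$, and then to recognize the resulting finite sum as the delayed matrix exponential. The two equalities in the statement are treated separately: the first follows by collapsing the auxiliary matrices $Q_{r+1}(r\sigma)$ that appear on the diagonal of the representation, while the second is purely a matter of matching the closed form against the piecewise-polynomial definition of the delayed exponential.

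First I would invoke the representation
\[
Z(\vartheta) = \sum_{r=0}^u Q_{r+1}(r\sigma)\,\frac{(\vartheta-(r-1)\sigma)^r}{r!}, \qquad \vartheta \in [(u-1)\sigma,\, u\sigma),
\]
which is guaranteed by Theorem~\ref{t1} together with the accompanying uniqueness statement for problem~\eqref{q2}. Observe that only the \emph{diagonal} matrices $Q_{r+1}(r\sigma)$ enter this sum. Setting $A_1 = \Theta$ in the defining recursion $Q_{u+1}(l\sigma) = A_0 Q_u((l-1)\sigma) + Q_u((l-1)\sigma)A_1$ eliminates the right-multiplication term and leaves the one-sided recursion $Q_{u+1}(l\sigma) = A_0 Q_u((l-1)\sigma)$. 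A short induction on $r$, anchored at $Q_1(0) = I = A_0^{0}$ and using $Q_{r+1}(r\sigma) = A_0\, Q_r((r-1)\sigma) = A_0 \cdot A_0^{r-1}$, then yields the diagonal values $Q_{r+1}(r\sigma) = A_0^{r}$. Substituting these into the representation gives
\[
Z(\vartheta) = \sum_{r=0}^u A_0^{\,r}\,\frac{(\vartheta-(r-1)\sigma)^r}{r!},
\]
which is the first claimed equality after renaming the summation index $r$ as $j$.

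For the second equality I would recall that the right-hand side is exactly the definition of the (Khusainov–Shuklin) delayed matrix exponential $e^{A_0(\vartheta-\sigma)}$: on each interval $[(u-1)\sigma, u\sigma)$ the delayed exponential is given by precisely this truncated series with shifted arguments, reducing to $I$ on $[-\sigma,0)$ and to $\Theta$ for $\vartheta < -\sigma$, in agreement with the branches of \eqref{Y_cont} once $Q_{r+1}(r\sigma)$ is replaced by $A_0^{r}$. Thus the identification follows immediately once the notation is unwound.

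The main obstacle here is conceptual rather than computational: one must take care that $e^{A_0(\vartheta-\sigma)}$ denotes the \emph{delayed} matrix exponential, not the ordinary one, since the finite truncated sum does not coincide with $\sum_{k\ge 0} A_0^{k}(\vartheta-\sigma)^k/k!$. A cleaner self-contained alternative would avoid the $Q$-recursion altogether: substitute $A_1 = \Theta$ directly into \eqref{q2} to obtain $\dot Z(\vartheta) = A_0 Z(\vartheta-\sigma)$ with $Z|_{[-\sigma,0]} = I$, verify by direct differentiation on each interval $[(u-1)\sigma, u\sigma)$ that the proposed closed form satisfies this equation and the initial data, and then appeal to the uniqueness result proved above to conclude that it is the solution.
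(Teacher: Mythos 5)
Your proposal is correct and follows exactly the route the paper intends: the corollary is stated without proof as an immediate specialization of Theorem~\ref{t1} and the representation $Z(\vartheta)=\sum_{r=0}^u Q_{r+1}(r\sigma)\frac{(\vartheta-(r-1)\sigma)^r}{r!}$, and your induction showing $Q_{r+1}(r\sigma)=A_0^{r}$ when $A_1=\Theta$ is the step being left implicit. Your remark that $e^{A_0(\vartheta-\sigma)}$ must be read as the delayed (Khusainov--Shuklin) matrix exponential rather than the ordinary one is a worthwhile clarification of the paper's notation.
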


    	\begin{theorem}\label{teo1}
Assume that $G(\xi)$ is continuous on $[0,\infty)$ and the initial function $\Psi:[-\sigma,0]\to\mathbb{R}^d$ is continuous. Then the solution of \eqref{1} is given by
\begin{equation}\label{eq:X13}
X(\xi)
= Z_{\Psi(0)}(\xi)
+ \int_{-\sigma}^0 Z_{B_0\Psi(s)+\Psi(s)B_1}(\xi-\sigma-s)\,ds
+ \int_0^\xi Z_{G(s)}(\xi-s)\,ds.
\end{equation}
\end{theorem}
\begin{proof}
Let
\[
X(\xi)
:= Z_{\Psi(0)}(\xi)
+ \int_{-\sigma}^0 Z_{B_0\Psi(s)+\Psi(s)B_1}(\xi-\sigma-s)\,ds
+ \int_0^\xi Z_{G(s)}(\xi-\sigma-s)\,ds
\]
be the candidate solution. We show that \(X\) satisfies the history and the differential equation.

\medskip\noindent\textbf{(1) History.}
If \(-\sigma\le\xi<0\) then for every \(D\) the fundamental function satisfies
\(Z_D(\xi)=\Theta\) (by definition of \(Z_D\) on \((-\infty,0)\)), hence the first term equals \(\Theta\), the third integral vanishes (upper limit \(\xi<0\)), and in the middle integral
\(\xi-\sigma-s<0\) for all \(s\in[-\sigma,0]\) so the integrand is \(\Theta\). Therefore \(X(\xi)=\Psi(\xi)\) for \(\xi\in[-\sigma,0)\).
At \(\xi=0\) we have \(Z_{\Psi(0)}(0)=\Psi(0)\) and the two integrals vanish, so \(X(0)=\Psi(0)\). Thus \(X\) matches the prescribed initial function on \([-\sigma,0]\).

\medskip\noindent\textbf{(2) Differentiation for \(\xi>0\).}
Fix \(\xi>0\). By Theorem \ref{t1} each \(Z_D\) is differentiable on \((0,\infty)\) and satisfies the homogeneous identity
\begin{equation}\label{ZD-ode}
\dot Z_D(\xi)=A_0 Z_D(\xi)+Z_D(\xi)A_1+B_0 Z_D(\xi-\sigma)+Z_D(\xi-\sigma)B_1,
\qquad \xi>0.
\end{equation}
Moreover, for fixed \(\xi\) the truncated-power representation of \(Z_D\) yields only finitely many nonzero polynomial terms, so differentiation may be interchanged with the finite sums and with the integrals (Leibniz rule applies). Differentiate \(X\) with respect to \(\xi\):
\[
\begin{split}
\dot X(\xi)
&= \dot Z_{\Psi(0)}(\xi)
+ \int_{-\sigma}^0 \frac{\partial}{\partial\xi}Z_{B_0\Psi(s)+\Psi(s)B_1}(\xi-\sigma-s)\,ds + \int_0^\xi \frac{\partial}{\partial\xi} Z_{G(s)}(\xi-s)\,ds
\;+\; Z_{G(\xi)}(0)\\
&= A_0 Z_{\Psi(0)}(\xi)+Z_{\Psi(0)}(\xi)A_1+B_0 Z_{\Psi(0)}(\xi-\sigma)+Z_{\Psi(0)}(\xi-\sigma)B_1\\
& + \int_{-\sigma}^0 \Big[ A_0 Z_{B_0\Psi(s)+\Psi(s)B_1}(\xi-\sigma-s) + Z_{B_0\Psi(s)+\Psi(s)B_1}(\xi-\sigma-s) A_1 \\
& + B_0 Z_{B_0\Psi(s)+\Psi(s)B_1}(\xi-2\sigma-s) + Z_{B_0\Psi(s)+\Psi(s)B_1}(\xi-2\sigma-s) B_1 \Big] ds\\
& + \int_0^\xi \Big[ A_0 Z_{G(s)}(\xi-s) + Z_{G(s)}(\xi-s) A_1 + B_0 Z_{G(s)}(\xi-\sigma-s) + Z_{G(s)}(\xi-\sigma-s) B_1 \Big] ds  + G(\xi)\\
&= A_0 \underbrace{\Big(Z_{\Psi(0)}(\xi) + \int_{-\sigma}^0 Z_{B_0\Psi(s)+\Psi(s)B_1}(\xi-\sigma-s) ds + \int_0^\xi Z_{G(s)}(\xi-s) ds \Big)}_{X(\xi)} \\
& + \underbrace{\Big(Z_{\Psi(0)}(\xi) + \int_{-\sigma}^0 Z_{B_0\Psi(s)+\Psi(s)B_1}(\xi-\sigma-s) ds + \int_0^\xi Z_{G(s)}(\xi-s) ds \Big)}_{X(\xi)} A_1\\
& + B_0 \underbrace{\Big(Z_{\Psi(0)}(\xi-\sigma) + \int_{-\sigma}^0 Z_{B_0\Psi(s)+\Psi(s)B_1}(\xi-2\sigma-s) ds + \int_0^{\xi-\sigma} Z_{G(s)}(\xi-\sigma-s) ds \Big)}_{X(\xi-\sigma)}\\
& + \underbrace{\Big(Z_{\Psi(0)}(\xi-\sigma) + \int_{-\sigma}^0 Z_{B_0\Psi(s)+\Psi(s)B_1}(\xi-2\sigma-s) ds + \int_0^{\xi-\sigma} Z_{G(s)}(\xi-\sigma-s) ds \Big)}_{X(\xi-\sigma)} B_1 + G(\xi)\\
&= A_0 X(\xi) + X(\xi) A_1 + B_0 X(\xi-\sigma) + X(\xi-\sigma) B_1 + G(\xi).
\end{split}
\]

\medskip\noindent\textbf{(3) Uniqueness.}
Uniqueness follows from standard linear theory for delay equations (or a Grönwall-type argument in the history space): any solution of the homogeneous problem with zero history is identically zero. Thus the constructed \(X\) is the unique solution of \eqref{1}.
\end{proof}
\begin{corollary}
Consider Eq.~\eqref{1} with $A_0 = A_1 = \Theta$ (zero matrices), $B_0, B_1 \in \mathbb{R}^{d\times d}$ constant, $G:[0,\infty)\to \mathbb{R}^{d\times d}$ continuous, and $\Psi:[-\sigma,0]\to \mathbb{R}^{d\times d}$ continuous. Then the unique solution of
\[
\dot X(\xi) = B_0 X(\xi-\sigma) + X(\xi-\sigma) B_1 + G(\xi), \quad \xi>0,
\]
with initial history $X(\xi) = \Psi(\xi)$ for $\xi \in [-\sigma,0]$, is
\[
X(\xi) = Z_{\Psi(0)}(\xi) + \int_{-\sigma}^0 Z_{B_0 \Psi(s)+\Psi(s) B_1}(\xi-\sigma-s)\, ds
+ \int_0^\xi Z_{G(s)}(\xi-\sigma-s)\, ds,
\]
where $Z_D(\xi)$ denotes the fundamental solution of the homogeneous problem
\[
\dot Z_D(\xi) = B_0 Z_D(\xi-\sigma) + Z_D(\xi-\sigma) B_1, \quad Z_D(\xi) = D \text{ for } \xi \in [-\sigma,0].
\]
\end{corollary}

\begin{corollary}
Let $A_0 = A_1 = \Theta$, $D = I$, and assume that $B_1 G(\xi) = G(\xi) B_1$ and $B_1 \Psi(\xi) = \Psi(\xi) B_1$ for all $\xi$, with $\Psi \in C^1([-\sigma,0],\mathbb{R}^{d\times d})$ and $G \in C([0,\infty),\mathbb{R}^{d\times d})$. Then the unique solution of
\[
\dot X(\xi) = B_0 X(\xi-\sigma) + X(\xi-\sigma) B_1 + G(\xi), \quad \xi>0,
\]
with initial history $X(\xi) = \Psi(\xi)$ for $\xi \in [-\sigma,0]$, is
\[
X(\xi) = Z_I(\xi)\Psi(0) 
+ \int_{-\sigma}^0 Z_I(\xi-\sigma-s)\Psi^{\prime}(s)\, ds
+ \int_0^\xi Z_I(\xi-\sigma-s)G(s)\, ds,
\]
where $Z_I(\xi)$ denotes the fundamental solution of
\[
\dot Z_I(\xi) = B_0 Z_I(\xi-\sigma) + Z_I(\xi-\sigma) B_1, \quad Z_I(\xi) = I \text{ for } \xi \in [-\sigma,0].
\]
\end{corollary}

\subsection{Discrete-time case}

We consider the discrete-time delayed matrix difference equation
\begin{equation}\label{eq:discrete_hom}
\begin{cases}
X(u+1) = A_0 X(u) + X(u) A_1 + B_0 X(u-m) + X(u-m) B_1, & u \in \mathbb{Z}_0^\infty,\\[0.5ex]
X(u) = Q_{u+m}(0), & u \in \mathbb{Z}_{-m}^0,\\
X(u) = \Theta, & u \in \mathbb{Z}_{-\infty}^{-m-1},
\end{cases}
\end{equation}
where $A_0, A_1, B_0, B_1$ are fixed $d\times d$ non-commutative matrices, $D$ is any matrix, $m\in \mathbb{N}$ is the discrete delay.

\begin{theorem}
 $Z_D(u)$  is the fundamental solution of \eqref{eq:discrete_hom}.
\end{theorem}

\begin{proof}
\textbf{Step 1: Initial conditions.}  
By construction, $Z_D(u) = \Theta$ for $u \in \mathbb{Z}_{-\infty}^{-m-1}$, and $Z_D(u) = Q_{u+m}(0)$ for $u \in \mathbb{Z}_{-m}^0$. Thus, $Z_D(u)$ satisfies the prescribed history.

\medskip
\textbf{Step 2: Verification of the difference equation.}  
For $u\ge 0$:
\begin{align*}
Z_D(u+1) &= \sum_{i=0}^{\lfloor \frac{u+1+m}{m+1}\rfloor} Q_{u+1-(i-1)m}(i)\\
&= \sum_{i=0}^{\lfloor \frac{u+m}{m+1}\rfloor} A_0 Q_{u-(i-1)m}(i) + \sum_{i=0}^{\lfloor \frac{u+m}{m+1}\rfloor} Q_{u-(i-1)m}(i) A_1 \\
& + \sum_{i=1}^{\lfloor \frac{u+1+m}{m+1}\rfloor} B_0 Q_{u-(i-1)m}(i-1) + \sum_{i=1}^{\lfloor \frac{u+1+m}{m+1}\rfloor} Q_{u-(i-1)m}(i-1) B_1\\
&= A_0 Z_D(u) + Z_D(u) A_1 + B_0 Z_D(u-m) + Z_D(u-m) B_1.
\end{align*}

Thus $Z_D(u)$ satisfies \eqref{eq:discrete_hom} for all $u \ge 0$, and is the fundamental solution.
\end{proof}

Consider the initial value problem
\begin{equation}\label{eq:discrete_nonhom}
\begin{cases}
X(u+1) = A_0 X(u) + X(u) A_1 + B_0 X(u-m) + X(u-m) B_1 + G(u), & u \in \mathbb{Z}_0^\infty,\\
X(u) = \Psi(u), & u \in \mathbb{Z}_{-m}^0,
\end{cases}
\end{equation}
where $G:\mathbb{Z}_0^\infty \to \mathbb{R}^{d\times d}$ is a given forcing sequence, $\Psi:\mathbb{Z}_{-m}^0\to \mathbb{R}^{d\times d}$ is the initial history, and $m \in \mathbb{N}$ is the discrete delay.

\begin{theorem}\label{thm:discrete_nonhom}
Let $Z_D(u)$ be the fundamental solution of the \eqref{eq:discrete_hom}. Then the solution of \eqref{eq:discrete_nonhom} is given by
\begin{equation}\label{eq:solution_discrete_nonhom}
X(u) = Z_{\Psi(0)}(u-m) + \sum_{r=-m+1}^{0} Z_{B_0 \Psi(r) + \Psi(r) B_1}(u-1-2m-r) + \sum_{r=1}^{u} Z_{G(r-1)}(u-m-r).
\end{equation}
\end{theorem}

\begin{proof}
\textbf{Step 1:}  
For $u \in \mathbb{Z}_{-m}^0$, all terms with $Z_D(u-k)$ for $u-k \ge 0$ vanish because the fundamental solution is zero for negative indices beyond the prescribed history. The remaining terms reduce exactly to $\Psi(u)$. Hence, $X(u) = \Psi(u)$ for $u \in \mathbb{Z}_{-m}^0$, satisfying the initial condition.

\medskip
\textbf{Step 2: }  
For $u \ge 0$, using linearity of the forward difference and the property of the fundamental solution $Z_D$:
\[
Z_D(u+1) = A_0 Z_D(u) + Z_D(u) A_1 + B_0 Z_D(u-m) + Z_D(u-m) B_1,
\]
we compute
\begin{align*}
 X(u+1) &=  Z_{\Psi(0)}(u+1-m) + \sum_{r=-m+1}^{0} Z_{B_0 \Psi(r) + \Psi(r) B_1}(u-2m-r) + \sum_{r=1}^{u+1} Z_{G(r-1)}(u+1-m-r)\\
 &=A_0 Z_{\Psi(0)}(u-m)+Z_{\Psi(0)}(u-m)A_1+ B_0 Z_{\Psi(0)}(u-2m)+Z_{\Psi(0)}(u-2m)B_1\\
 &+   \sum_{r=-m+1}^{0} A_0 Z_{B_0 \Psi(r) + \Psi(r) B_1}(u-1-2m-r)+ \sum_{r=-m+1}^{0} Z_{B_0 \Psi(r) + \Psi(r) B_1}(u-1-2m-r)A_1\\
 &+   \sum_{r=-m+1}^{0} B_0 Z_{B_0 \Psi(r) + \Psi(r) B_1}(u-1-3m-r)+ \sum_{r=-m+1}^{0} Z_{B_0 \Psi(r) + \Psi(r) B_1}(u-1-3m-r)B_1\\
 &+ \sum_{r=1}^{u} A_0 Z_{G(r-1)}(u-m-r)+\sum_{r=1}^{u} Z_{G(r-1)}(u-m-r) A_1 \\
  &+ \sum_{r=1}^{u} B_0 Z_{G(r-1)}(u-2m-r)+\sum_{r=1}^{u} Z_{G(r-1)}(u-2m-r) B_1 +Z_{G(u)}(-m)\\
&= A_0 X(u) + X(u) A_1 + B_0 X(u-m) + X(u-m) B_1 + G(u).
\end{align*}
Thus, $X(u)$ satisfies the difference equation \eqref{eq:discrete_nonhom}.

\medskip
\textbf{Step 3: }  
Uniqueness follows from linearity and the zero solution property of the homogeneous system: any solution with zero history and zero forcing is identically zero. Hence, the constructed $X(u)$ is the unique solution.
\end{proof}

\begin{corollary}
Suppose $A_0 = D = I$, $A_1 = \Theta$, and the matrices $B_1$ and $G(u)$ commute for all $u \in \mathbb{Z}_{0}^{\infty}$, i.e., $B_1 G(u) = G(u) B_1$, and $B_1$ and $\Psi(u)$ commute for all $u \in \mathbb{Z}_{-m}^{0}$, i.e., $B_1 \Psi(u) = \Psi(u) B_1$. Then the solution of \eqref{2} is given by
\begin{equation}
X(u) = Z_{I}(u)\, \Psi(-m) + \sum_{r=-m+1}^{0} Z_{I}(u-m-r)\, \Delta \Psi(r-1) + \sum_{r=1}^{u} Z_{I}(u-m-r)\, G(r-1),
\end{equation}
where $\Delta \Psi(r-1) := \Psi(r) - \Psi(r-1)$.
\end{corollary}

\section{Illustrative Example}\label{sec6} 

\begin{example} \label{ex1}
Consider~\eqref{1} with $\sigma=1$ and $d=2$. Let the coefficient matrices and functions be defined as follows:
\[
A_0=\begin{pmatrix}1&0\\0&0\end{pmatrix},\quad  
A_1=\begin{pmatrix}0&0\\0&1\end{pmatrix},\quad  
B_0=\begin{pmatrix}1&0\\0&-1\end{pmatrix},\quad  
B_1=\begin{pmatrix}-1&0\\0&1\end{pmatrix},\quad  
G(\xi)=\xi I_2,\quad  
\Psi(\xi)=I_2.
\]

Under these assumptions, the expression for $X(\xi)$ becomes
\begin{equation}
X(\xi) = Z_{I_2}(\xi) + \int_{-1}^0 Z_{B_0+B_1}(\xi-1-s)\,ds + \int_0^\xi Z_{G(s)}(\xi-s)\,ds.
\end{equation}

Since $B_0 + B_1 = \Theta$, the middle term in the above expression vanishes. Hence, by Theorem~\ref{teo1}, the solution of~\eqref{1} simplifies to
\begin{equation}
X(\xi) = Z_{I_2}(\xi) + \int_0^\xi Z_{G(s)}(\xi-s)\,ds.
\end{equation}

Next, we compute $Z_{I_2}(\xi)$. According to the definition of the fundamental matrix, we have
\begin{align}
Z_{I_2}(\xi) = 
\begin{cases}
\Theta, & \xi < 0,\\[0.5ex]
I_2, & \xi = 0,\\[0.5ex]
e^{\xi}I_2, & \xi>0.
\end{cases}
\end{align}

Similarly, for $Z_{G(s)}(\xi)$, we obtain
\begin{align}
Z_{G(s)}(\xi) = 
\begin{cases}
\Theta, & \xi < 0,\\[0.5ex]
G(s), & \xi=0,\\[0.5ex]
s e^{\xi}I_2, & \xi>0.
\end{cases}
\end{align}

Substituting these expressions into the formula for $X(\xi)$ and performing the integration gives
\begin{align*}
X(\xi)=
\begin{cases}
I_2, & -1\leq \xi\leq 0,\\[0.5ex]
\left(2e^{\xi}-\xi-1\right)I_2, & \xi>0.
\end{cases}
\end{align*}

Thus, the explicit form of $X(\xi)$ demonstrates how the solution evolves across the regions $-1\leq \xi\leq0$, and $\xi>0$, highlighting the smooth transition governed as Figure \ref{Fiq1}.
\begin{figure}
    \centering
    \includegraphics[width=0.8\linewidth]{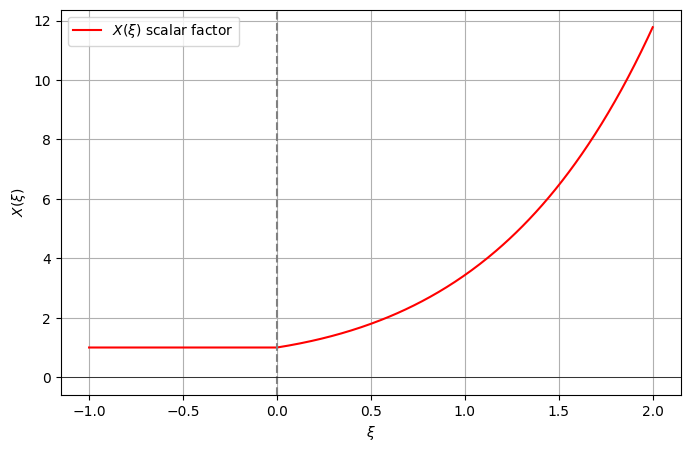}
    \caption{Representation of $X_{ii}(\xi),\, i=1,2$}
    \label{Fiq1}
\end{figure}
\end{example}

\begin{example} \label{ex2}
Consider the discrete-time system~\eqref{2} with the same coefficient matrices and functions as in Example~\ref{ex1}. Then the solution of the discrete-time system is given by

\begin{equation}
X(u) = Z_{I_2}(u-1) + \sum_{r=1}^{u} Z_{G(r-1)}(u-1-r).
\end{equation}

Let us compute $X(u)$ explicitly by using a discrete exponential approximation for $Z_D(u)$:

\[
Z_{I_2}(u-1) = 
\begin{cases}
\Theta, & u\in \mathbb{Z}_{-\infty}^{0},\\
I_2, & u\in \mathbb{Z}_1^{\infty}.
\end{cases}
\]
and 
\[
Z_{G(r-1)}(u-1-r) = 
\begin{cases}
\Theta, & u\in \mathbb{Z}_{-\infty}^{r},\\
(r-1)I_2, & u\in \mathbb{Z}_{r+1}^{\infty}.
\end{cases}
\]
Then 
\begin{align*}
X(u)=\begin{cases}
  I_2, & u\in \mathbb{Z}_{-1}^1,\\
    \frac{u^2-3u+4}{2}  I_2,& u\in \mathbb{Z}_{2}^{\infty}.
\end{cases}
\end{align*}

This illustrates the step-by-step evolution of the discrete-time solution $X(u)$ under the given matrices and functions in Figure \ref{Fig2}.

\begin{figure}
    \centering
    \includegraphics[width=0.8\linewidth]{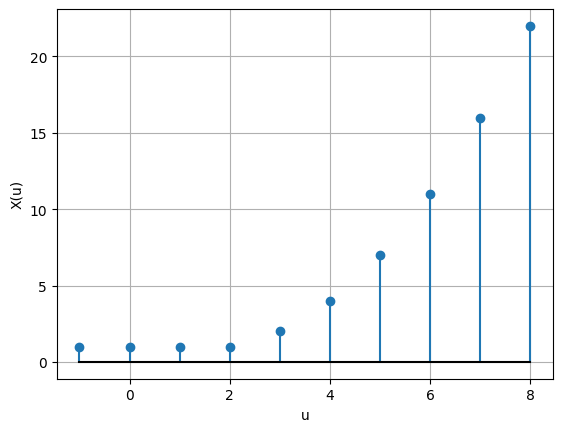}
    \caption{Representation of $X_{ii}(u),\, i=1,2$}
    \label{Fig2}
\end{figure}
\end{example}


\begin{thebibliography}{99}%
    \bibitem{Khusayanov1} Khusainov, D., Ivanov, A., \& Shuklin, G. (2005). On a Representation of Solutions of Linear Delay Systems. Differential equations, 41(7).

    \bibitem{Li1}Li, M., \& Wang, J. (2017). Finite time stability of fractional delay differential equations. Applied Mathematics Letters, 64, 170-176.
    
    \bibitem{Li2} Li, M., \& Wang, J. (2018). Exploring delayed Mittag-Leffler type matrix functions to study finite time stability of fractional delay differential equations. Applied Mathematics and Computation, 324, 254-265.
    
    \bibitem{Nazim} Mahmudov, N. I. (2019). Delayed perturbation of Mittag‐Leffler functions and their applications to fractional linear delay differential equations. Mathematical Methods in the Applied Sciences, 42(16), 5489-5497.
    
    \bibitem{Ismail} Huseynov, I. T., \& Mahmudov, N. I. (2024). Delayed analogue of three‐parameter Mittag‐Leffler functions and their applications to Caputo‐type fractional time delay differential equations. Mathematical Methods in the Applied Sciences, 47(13), 11019-11043.

    \bibitem{diblik1} Diblík, J. (2025). Representation of solutions to a linear matrix first-order differential equation with delay. Bulletin of Mathematical Sciences, 1-13.
		
	\bibitem{diblik2} Diblík, J. (2025). Representation of solutions to a linear matrix discrete equation with single delay. Applied Mathematics Letters, 109577.
		
	 \bibitem{Chen} Chen, Y. (2024). Representation of solutions and finite‐time stability for fractional delay oscillation difference equations. Mathematical Methods in the Applied Sciences, 47(6), 3997-4013.
        
		\bibitem{diblik4} Diblik, J., \& Mencáková, K. (2020). Representation of solutions to delayed linear discrete systems with constant coefficients and with second-order differences. Applied Mathematics Letters, 105, 106309.
		
		\bibitem{diblik5} Diblík, J., \& Khusainov, D. Y. (2006). Representation of solutions of discrete delayed system $x(k+ 1)= Ax (k)+ Bx (k-m)+ f(k)$ with commutative matrices. Journal of Mathematical Analysis and Applications, 318(1), 63-76.
		
		\bibitem{diblik6} Diblík, J., \& Khusainov, D. (2006). Representation of solutions of linear discrete systems with constant coefficients and pure delay. Advances in Difference Equations, 2006(1), 080825.
		
		
		\bibitem{Elshenhab} Elshenhab, A. M., \& Wang, X. T. (2022). Representation of solutions of delayed linear discrete systems with permutable or nonpermutable matrices and second-order differences. Revista de la Real Academia de Ciencias Exactas, Físicas y Naturales. Serie A. Matemáticas, 116(2), 58.
		
		
		\bibitem{Jin} Jin, X., Wang, J., \& Shen, D. (2022). Representation and stability of solutions for impulsive discrete delay systems with linear parts defined by non-permutable matrices. Qualitative theory of dynamical systems, 21(4), 152.
		
		\bibitem{Khusainov2003} Khusainov, D. Y., \& Shuklin, G. V. (2003). Linear autonomous time-delay system with permutation matrices solving. Stud. Univ. Žilina, 17(1), 101-108.
		
		\bibitem{Khusainov2008} Khusainov, D. Y., Diblík, J., Růžičková, M., \& Lukáčová, J. (2008). Representation of a solution of the Cauchy problem for an oscillating system with pure delay. Nonlinear Oscillations, 11(2), 276-285.
		
		\bibitem{Liang2018} Liang, C., Wang, J., \& Fečkan, M. (2018). A study on ILC for linear discrete systems with single delay. Journal of Difference Equations and Applications, 24(3), 358-374.
		
		\bibitem{LiangORegan} Liang, C., Wang, J., \& O’Regan, D. (2018). Representation of a solution for a fractional linear system with pure delay. Applied Mathematics Letters, 77, 72-78.
		
		\bibitem{Mahmudov2018} Mahmudov, N. I. (2018). Representation of solutions of discrete linear delay systems with non permutable matrices. Applied Mathematics Letters, 85, 8-14.
		
		\bibitem{Mahmudov2019} Mahmudov, N. I. (2019). A novel fractional delayed matrix cosine and sine. Applied Mathematics Letters, 92, 41-48.
		
		\bibitem{Mahmudov2022} Mahmudov, N. I. (2022). Analytical Solution of the Fractional Linear Time‐Delay Systems and their Ulam‐Hyers Stability. Journal of Applied Mathematics, 2022(1), 2661343.
		
		\bibitem{Mahmudov2022b} Mahmudov, N. I. (2022). Multi-delayed perturbation of Mittag-Leffler type matrix functions. Journal of Mathematical Analysis and Applications, 505(1), 125589.
		
		\bibitem{Mahmudov2024} Mahmudov, N. I. (2024). Multiple delayed linear difference equations with non-permutable matrix coefficients: the method of Z-transform. Montes Taurus J. Pure and Appl. Math, 6, 138-146.
		
		\bibitem{Mahmudov2025} Mahmudov, N. I., Awadalla, M., \& Arab, M. (2025). Explicit Form of Solutions of Second-Order Delayed Difference Equations: Application to Iterative Learning Control. Mathematics (2227-7390), 13(6).
		\bibitem{Mah} Mahmudov, N. (2020). Delayed linear difference equations: the method of Z-transform. Electronic Journal of Qualitative Theory of Differential Equations, 2020(53), 1-12.
		\bibitem{Medved2012} Medved, M., \& Pospišil, M. (2012). Sufficient conditions for the asymptotic stability of nonlinear multidelay differential equations with linear parts defined by pairwise permutable matrices. Nonlinear Analysis: Theory, Methods \& Applications, 75(7), 3348-3363.
		
		\bibitem{Medved2016} Medved, M., \& Pospíšil, M. (2018). Representation of solutions of systems of linear differential equations with multiple delays and linear parts given by nonpermutable matrices. Journal of Mathematical Sciences, 228(3), 276-289.
		
		\bibitem{Pospisil2012} Pospíšil, M.(2012). Representation and stability of solutions of systems of functional differential equations with multiple delays. Electron. J. Qual. Theory Differ. Equ, 54, 1-30.
		
		\bibitem{Pospisil2017} Pospíšil, M. (2017). Representation of solutions of delayed difference equations with linear parts given by pairwise permutable matrices via Z-transform. Applied mathematics and computation, 294, 180-194.
		
		\bibitem{Pospíšil} Pospíšil, M. (2020). Representation of solutions of systems of linear differential equations with multiple delays and nonpermutable variable coefficients. Mathematical Modelling and Analysis, 25(2), 303-322.
		
		\bibitem{QiuWang} Qiu, K., \& Wang, J. (2020). Representation of solutions of a second order delay differential equation. Electronic Journal of Differential Equations, 2020(01-132), 72-20.
		
		\bibitem{Qiu2023} Qiu, K., Wang, J., \& Liao, Y. (2023). Representation of a solution for a neutral type differential equation with pure delay on fractal sets. Qualitative Theory of Dynamical Systems, 22(1), 18.
		
		
		\bibitem{Svoboda} Svoboda, Z. (2017). Representation of Solutions of Linear Differential Systems of the Second Order with Constant Delays. Journal of Mathematical Sciences, 222(3).
	
        \bibitem{Yang2025} Yang, T., \& Li, M. (2025). Representation of Solutions and Ulam–Hyers Stability of the Two-Sided Fractional Matrix Delay Differential Equations. Fractal and Fractional, 9(10), 625.
		\bibitem{Els} Elshenhab, A. M., AlNemer, G., \& Wang, X. T. (2025). Multi-Delayed Discrete Matrix Functions and Their Applications in Solving Higher-Order Difference Equations. Mathematics, 13(18), 2939.

        \bibitem{Dib2026} Diblík, J. (2026). Solution of a discrete matrix equation with second-order difference and single delay. Applied Mathematics and Computation, 508, 129607.

       \bibitem{Els2} Elshenhab, A. M., AlNemer, G., \& Wang, X. (2025). Exact Solutions to Multiple-Delayed Linear Discrete Matrix Equations. Symmetry.
	\end{thebibliography}
\end{document}